\font\mbn=msbm10 scaled \magstep1
\font\mbs=msbm7 scaled \magstep1
\font\mbss=msbm5 scaled \magstep1
\newcommand{\Di}      {\mathbb{D}}
\newcommand{\Z}        {\mathbb{Z}  } 
\newcommand\Co           {{\mathbb C}}
\newtheorem{Th}{Theorem}[section]
\newtheorem{D}[Th]{Definition}
\newtheorem{Proposition}[Th]{Proposition}
\newtheorem{R}[Th]{Remark}
\begin{document}
\title[A Rolle type theorem for cyclicity of zeros]{A ROLLE TYPE THEOREM for CYCLICITY OF ZEROS OF FAMILIES OF ANALYTIC FUNCTIONS
}

\author{Alexander Brudnyi} 
\address{Department of Mathematics and Statistics\newline
\hspace*{1em} University of Calgary\newline
\hspace*{1em} Calgary, Alberta\newline
\hspace*{1em} T2N 1N4}
\email{albru@math.ucalgary.ca}
\keywords{Cyclicity of zero, holomorphic function, Wronskian determinant, Cartan estimates, Remez inequality}
\subjclass[2010]{Primary 30C15, Secondary 32A15}

\thanks{Research supported in part by NSERC}

\begin{abstract}
Let $\{f_{\lambda; j}\}_{\lambda\in V; 1\le j\le k}$ be families of holomorphic functions in the open unit disk $\Di\subset\Co$
depending holomorphically on a parameter $\lambda\in V\subset \Co^n$. We establish a Rolle type theorem
for the generalized multiplicity (called {\em cyclicity}) of zero of the family of univariate holomorphic functions $\left\{\sum_{j=1}^k f_{\lambda;j}\right\}_{\lambda\in V}$ at $0\in\Di$.
As a corollary, we estimate the cyclicity of the family of generalized exponential polynomials, that is, the family of entire functions of the form $\sum_{k=1}^m P_k(z)e^{Q_k(z)}$, $z\in\Co$,
where $P_k$ and $Q_k$ are holomorphic polynomials of degrees $p$ and $q$, respectively, parameterized by vectors of coefficients of $P_k$ and $Q_k$.
\end{abstract}

\date{} 

\maketitle

\section{Introduction}
\subsection{Basic Definitions and Results}
Let $B_n(\lambda, r)\subset\Co^n$ be the open Euclidean ball centered at $\lambda$ of radius $r$. We set $\Di_r := B_1(0, r)\subset\Co$ and $\Di :=\Di_1$. By $K_\varepsilon:=\cup_{\lambda\in K} B_n(\lambda,\varepsilon)$ we denote the $\varepsilon$-neighbourhood of $K\subset \Co^n$. 
Let $V\subset\Co^n$ be a domain and $\mathcal O(V )$ be the
ring of holomorphic functions on $V$. Consider the Maclaurin series expansion of the family $\mathcal F=\{f_\lambda\}_{\lambda\in V}$ of holomorphic functions in the disk $\Di_\rho$ depending holomorphically
on $\lambda\in V$,
\begin{equation}\label{e1.1}
f_\lambda(z)=\sum_{k=0}^\infty a_k(\lambda)z^k,\qquad \lambda\in V,\quad z\in\Di_\rho,\quad a_k\in\mathcal O(V).
\end{equation}
Let $U\subset V$ be a subdomain.
By $\mathcal I(\mathcal F\,;U)\subset \mathcal O(U)$ we denote the ideal generated by all the restrictions $a_k|_{U}$. The central set of $\mathcal F$ in $U$ is defined by the formula
\[
\mathcal C(\mathcal F\,;U):=\{\lambda\in U\, :\, f_\lambda\equiv 0\}.
\]
\begin{D}\label{d1}
The family $\mathcal F=\{f_\lambda\}_{\lambda\in V}$ has cyclicity $k\ge 0$ in a compact set $K\subset V$ if the following holds:
\begin{itemize}
\item[(a)]
There exist $\varepsilon_0>0$ and $\delta_0>0$ such that for every $\lambda$ in $K_{\varepsilon_0}\cap\bigl(V\setminus \mathcal C(\mathcal F;V)\bigr)$
the function $f_\lambda$ has at most $k$ zeros in $\Di_{\delta_0}$ (counting multiplicities).
\item[(b)]
For arbitrary positive numbers $\varepsilon<\varepsilon_0$ and $\delta<\delta_0$ there exists a parameter $\lambda\in K_{\varepsilon}\cap\bigl(V\setminus \mathcal C(\mathcal F;V)\bigr)$ such that $f_\lambda$ has exactly $k$ zeros in $\Di_\delta$.
\end{itemize}
\end{D}
The notion goes back to the famous paper \cite{B1} (see also \cite{B2}) of Bautin who found a connection between cyclicity
and algebraic properties of $\mathcal I(\mathcal F;V)$ and in this way proved that a perturbation
of a center in a family of quadratic vector fields can generate no
more than 3 small amplitude limit cycles.

The cyclicity of the family $\mathcal F$ in $K$ is denoted by $c(\mathcal F; K)$. For a one point
compact set $K :=\{\mu\}$ we use the notation $c(\mathcal F; \mu)$. For $\mu\not\in\mathcal C(\mathcal F; V)$ this number equals the multiplicity
of the zero of $f_\mu$ at $0\in\Di$. Therefore the term ``generalized
multiplicity'' seems to be quite natural for $c(\mathcal F;K)$.

The problem of bounding cyclicity for specific families $\mathcal F$
arises in several
diverse areas of analysis, including dynamical systems (related to the, so-called, second
part of Hilbert's 16 problem), transcendental number theory (related to Hilbert's 7
problem) and approximation theory (Bernstein-Markov-Remez' type inequalities);
for recent developments and corresponding references, see, e.g., \cite{Y} and
the Featured Review [MR 98h:34009]. \medskip

The basic properties of cyclicity are obtained in \cite{Br1}. We formulate some of them.

Let $\mathcal F=\{f_\lambda\}$ and $\mathcal G=\{g_\lambda\}$ be families of holomorphic functions in $\Di_\rho$ 
depending holomorphically on $\lambda\in V$. We set 
\begin{equation}\label{notation}
\mathcal F':=\left\{\frac{df_\lambda}{dz}\right\},\quad \mathcal F+\mathcal G:=\{f_\lambda+g_\lambda\},\quad \mathcal F\cdot\mathcal G:=\{f_\lambda\cdot g_\lambda\}\quad \text{and}\quad e^{\mathcal F}:=\{e^{f_\lambda}\}.
\end{equation}

The following properties are presented in Propositions 2.1, 2.2, 2.5, Theorems 1.5, 1.7 and Remark 1.6 of \cite{Br1} (see also this paper for additional references). 
\begin{itemize}
\item[(a)] 
For a compact set $K\subset V$,
\[
c(\mathcal F;K)=\max_{\mu\in K}c(\mathcal F;\mu).
\]
\item[(b)]
There exist a constant $A$ and an open neighbourhood $U$ of $K$ such that (see \eqref{e1.1}) 
\[
|a_k(\lambda)|\le A\cdot\sup_{U}|a_k|\cdot\max_{0\le i\le c(\mathcal F;K)}|a_i(\lambda)|\quad\text{for all}\quad \lambda\in U;
\]
here $c(\mathcal F;K)$ cannot be replaced by a smaller index.
\end{itemize}
If $K=\{\mu\}$, the latter property is equivalent to the following one (see \cite{HRT}).

Let $\mathcal I(\mathcal F;\mu)$ be the ideal in the local ring $\mathcal O_\mu$ of germs of holomorphic functions at $\mu\in V$ generated by germs at $\mu$ of the Taylor coefficients $a_j$ of $\mathcal F$, see \eqref{e1.1}, and  $\mathcal I_d(\mathcal F;\mu)\subset \mathcal I(\mathcal F;\mu)$ be the ideal generated by germs at $\mu$ of $a_0,\dots, a_d$. We define $d(\mathcal F;\mu)$ to be the minimal integer $d$ such that the integral closures of $\mathcal I_d(\mathcal F;\mu)$ and $\mathcal I(\mathcal F;\mu)$ coincide.
\begin{itemize}
\item[(b$'$)] 
\[
c(\mathcal F;\mu)=d(\mathcal F;\mu).
\]
\item[(c)] 
For a compact set $K\subset V$ there exists an open neighbourhood $O_K$ of $K$ such that the central set $\mathcal C(\mathcal F;O_K)$ coincides with the set of common zeros of $a_0,\dots, a_{c(\mathcal F;K)}$.
\item[(d)] 
\[
c(\mathcal F;K)\le c(\mathcal F';K)+1.
\]
\item[(e)] 
\[
c(e^{\mathcal F};K)=0.
\]
\item[(f)] 
\[
c(\mathcal F\cdot\mathcal G;K)\le c(\mathcal F;K)+c(\mathcal G;K).
\]
\item[(g)] 
\[
c(\mathcal F+\mathcal G;K)\le\max\{c(\mathcal F;K),c(\mathcal G;K), c(W(\mathcal F,\mathcal G);K)+1\};
\]
here $W(\mathcal F,\mathcal G):=\mathcal F'\cdot\mathcal G-\mathcal G'\cdot\mathcal F$ is the family of Wronskian determinants of
$\{\mathcal F,\mathcal G\}$.
\item[(h)]
For every $\varepsilon>0$ there exist an open neighbourhood $O_\varepsilon$ of $K$ and a positive number $R_\varepsilon$ such that for all $\lambda\in O_\varepsilon\setminus\mathcal C(\mathcal F;V)$
\[
\sup_{\Di_{R_\varepsilon}}\ln |f_\lambda|-\sup_{\Di_{R_\varepsilon/e}}\ln |f_\lambda|\le c(\mathcal F;K)+\varepsilon
\]
and each $f_\lambda$ has at most $c(\mathcal F;K)$ zeros in $\Di_{R_\varepsilon}$ (counting multiplicities);

\noindent here $c(\mathcal F;K)$ cannot be replaced by a smaller number.
\end{itemize}
Property (h) generates the following Cartan-type estimates and Remez-type inequalities for the family $\mathcal F$, see, e.g., \cite[Th.~3.1 and Sect.~2]{Br2} and references therein.
\begin{itemize}
\item[(h$'$)]
Fix $H\in (0,1]$, $d > 0$ and set $A:=e^{\left(\frac{e+1}{e-1}\right)^2}$. For each $\lambda\in O_\varepsilon\setminus\mathcal C(\mathcal F;V)$ and $R\in (0,R_\varepsilon)$
there exists a family of open disks $\{D_{j;\lambda}\}_{1\le j\le k}$, $k\le c(\mathcal F;K)$, with
$\sum r_{j;\lambda}^d\le \frac{(2H R)^d}{d}$, where $r_{j;\lambda}$ is the radius of $D_{j;\lambda}$, such that
\begin{equation}\label{cartan}
|f_\lambda(z)|\ge \sup_{\Di_{R}}|f_\lambda|\cdot A^{-\varepsilon}\cdot\left(\frac{H}{A}\right)^{c(\mathcal F;K)}\quad\text{for all}\quad z\in \Di_{R/e}\setminus \cup_j D_{j;\lambda}.
\end{equation}
\item[(h$''$)] Consider the function $\Phi(t):=t+\sqrt{t^2-1}$, $t\ge 1$. There exists an absolute constant $\hat{c}\ge 1$ such that for each $\lambda\in O_1$, an interval $I\subset\Di_{R_1/e}$ and a measurable subset $\omega\subset I$ the following inequality holds:
\begin{equation}\label{remez}
\sup_I |f_\lambda|\le \Phi\left(\frac{2m_1(I)}{m_1(\omega)}-1\right)^{\hat{c}\cdot\max\{c(\mathcal F;K),1\}}\cdot\sup_\omega |f_\lambda|;
\end{equation}
here $m_1$ is the linear measure on $\Co$.
\end{itemize}

\subsection{Formulation of the Main Result}

In this note we generalize property (g) and establish a Role type theorem for the cyclicity of the family 
$\mathcal F:=\sum_{j=1}^k \mathcal F_j$, where $\mathcal F_j:=\{f_{\lambda;j}\}$, $1\le j\le k$, are families of holomorphic functions in $\Di_\rho\subset\Co$
holomorphic in $\lambda\in V\subset \Co^n$.  As a corollary, in the next section we estimate the cyclicity of the
family of generalized exponential polynomials.

In our main result $W(\mathcal F_{i_1},\dots, \mathcal F_{i_l}):=\{W(f_{\lambda;i_1},\dots, f_{\lambda;i_l})\}_{\lambda\in V}$, $1\le i_1<\cdots< i_l\le k$, stands for the family of Wronskian determinants
\[
W(f_{\lambda;i_1},\dots, f_{\lambda;i_l}):=\left|
\begin{array}{cccc}
f_{\lambda;i_1}&f_{\lambda;i_2}&\dots&f_{\lambda;i_l}\\
f_{\lambda; i_1}'&f_{\lambda; i_2}'&\dots&f_{\lambda; i_l}'\\
\vdots&\vdots&\vdots&\vdots\\
f_{\lambda;i_1}^{(l-1)}&f_{\lambda; i_2}^{(l-1)}&\dots&f_{\lambda;i_l}^{(l-1)}
\end{array}
\right|
\]
Let $\mathcal K$ be the family of all distinct subsets of $\{1,\dots, k\}$. By $|I|$ we denote the cardinality of $I\in\mathcal K$. For $I=\{i_1,\dots, i_l\}\in\mathcal K$  and a compact subset $K\subset V$ we set
\[
c(\mathcal F_I;K):=c\bigl(W(\mathcal F_{i_1},\dots, \mathcal F_{i_l});K\bigr).
\]
\begin{Th}\label{main}
\[
c(\mathcal F;K)\le \max_{I\in\mathcal K}\{c(\mathcal F_I;K)+|I|-1\}.
\]
\end{Th}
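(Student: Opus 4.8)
The plan is to run the mechanism behind property (g) as an iterated descent, peeling off one family at a time, rather than invoking (g) as a black box. Write $\mathcal F=\mathcal G+\mathcal F_k$ with $\mathcal G:=\sum_{j=1}^{k-1}\mathcal F_j$. The content of (g) is a Rolle step: away from the zeros of $f_{\lambda;k}$ the zeros of $f_\lambda$ are the zeros of $1+\sum_{j<k}f_{\lambda;j}/f_{\lambda;k}$, and by a Rolle--Voorhoeve argument their number in a disk is controlled, up to an additive $+1$, by the zeros of
\[
\Bigl(\tfrac{f_\lambda}{f_{\lambda;k}}\Bigr)'=\frac{W(\mathcal G,\mathcal F_k)}{f_{\lambda;k}^{2}},\qquad W(\mathcal G,\mathcal F_k)=\sum_{j=1}^{k-1}W(\mathcal F_j,\mathcal F_k).
\]
Thus the problem for $k$ families is reduced to the same problem for the $k-1$ families $\mathcal G_j:=W(\mathcal F_j,\mathcal F_k)$, at the cost of one Rolle step. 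First I would reduce to a single point $\mu\in K$ by (a) and, via Definition \ref{d1} together with the counting statement in (h), to bounding the number of zeros of $f_\lambda$ in a small disk for nearby non-central $\lambda$, so that the Rolle--Voorhoeve lemma applies on each level of the descent.

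The key algebraic input is the Wronskian-of-Wronskians identity
\[
W\bigl(W(\phi,\psi_1),\dots,W(\phi,\psi_m)\bigr)=\pm\,\phi^{\,m-1}\,W(\phi,\psi_1,\dots,\psi_m),
\]
which, applied with $\phi=f_{\lambda;k}$, shows that the Wronskians produced at the next level of the descent are, up to a sign irrelevant for cyclicity, $f_{\lambda;k}^{\,m-1}$ times the genuine Wronskians $W(\mathcal F_k,\mathcal F_{j_1},\dots,\mathcal F_{j_m})$. Iterating, the $l$-th level is governed by the Wronskians $W(\mathcal F_{i_1},\dots,\mathcal F_{i_{l+1}})$ indexed by the $(l+1)$-element subsets $I\in\mathcal K$, and each level contributes a single additive $+1$; after $|I|-1$ Rolle steps one reaches $\mathcal F_I$, which produces the term $c(\mathcal F_I;K)+|I|-1$ and, over the worst branch, the asserted maximum. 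The subset bookkeeping is then routine: subsets avoiding $k$ arise from the descent inside $\mathcal G$, the subset $\{k\}$ from the term $c(\mathcal F_k;K)$, and subsets containing $k$ from the Wronskian branch, together exhausting $\mathcal K$.

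The main obstacle is precisely the spurious factor $\phi^{\,m-1}=f_{\lambda;k}^{\,m-1}$ (and, deeper in the descent, the analogous powers of the partial leading Wronskians). If one closed the recursion by feeding these families into (g) as a black box, one would be forced to estimate $c\bigl(f_{\lambda;k}^{\,m-1}\cdot W(\mathcal F_k,\dots);K\bigr)$ by $(m-1)c(\mathcal F_k;K)+c(\mathcal F_I;K)$ through (f); this over-counts and yields only a \emph{sum}-type bound along a chain of subsets, strictly weaker than the claimed maximum (the loss is already visible for $\mathcal F_1=1$, $\mathcal F_2=z$, $\mathcal F_3=(z-\lambda)^2$, where iterating (g) gives $4$ while the true bound is $3$). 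The resolution, and the heart of the proof, is that these spurious numerator factors must \emph{not} be discarded: each is absorbed by the denominator $W(\mathcal F_k,\mathcal F_{j_0})^{2}$ (more generally, the square of the divisor chosen at the next level), which vanishes to the matching order along the zeros of the factor. Carrying the full meromorphic ratio through the descent and accounting for its zeros and poles by the Rolle--Voorhoeve lemma is exactly what converts the lossy chain estimate into the clean per-subset bound.

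I expect the delicate point to be this cancellation bookkeeping. One must verify that, level by level, the order of vanishing of the emerging common factor is dominated by twice that of the divisor, so that the Voorhoeve index of the ratio is charged only to the genuine subset Wronskian plus the accumulated $+1$'s, and that the passage between zero-counts and the cyclicities $c(\mathcal F_I;K)$ via (h) is uniform in $\lambda$ near $\mu$. Granting this, taking the maximum over the subset reached by the worst branch of the descent yields $c(\mathcal F;K)\le\max_{I\in\mathcal K}\{c(\mathcal F_I;K)+|I|-1\}$.
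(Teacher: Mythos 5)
Your descent is the right engine---it is essentially the Voorhoeve--van der Poorten argument, and when the iterated Rolle steps are unrolled they reproduce the Frobenius factorization of the differential operator annihilating the span of the $f_{\lambda;j}$, which is exactly what the paper uses; the tower of ratios $\widetilde W_{w;s-1}^2/(\widetilde W_{w;s}\widetilde W_{w;s-2})$ makes the ``absorption of spurious factors'' you worry about automatic, because the intermediate multiplicities $m_s$ telescope. But the two points you yourself flag as delicate are genuine gaps, and they are precisely where the paper does something you have not supplied. First, your reduction of cyclicity to ``the number of zeros of $f_\lambda$ in a small disk for nearby non-central $\lambda$'' via Definition \ref{d1} and property (h), followed by a Rolle--Voorhoeve step at every level of the descent, does not close: the Rolle--Voorhoeve lemma for zero counts of complex-valued functions carries a boundary variation-of-argument term beyond the additive $+1$, and this term would have to be controlled uniformly in $\lambda$ as $\lambda$ degenerates to the central set, at every level---which is essentially as hard as the theorem itself. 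The paper avoids disk counting entirely: it invokes the characterization \eqref{eq2.3} of $c(\mathcal F;\mu)$ through holomorphic curves $\varphi$ and the index $b_\varphi(\mathcal F)$, chooses one curve with $c(\mathcal F;\mu)=b_\varphi(\mathcal F)$, normalizes the Taylor-coefficient families by dividing out common polynomial factors, and then finds a single parameter $w_*$ at which $\tilde g_{w_*}$ vanishes to order exactly $c(\mathcal F;\mu)$ at $0$ while each $\widetilde W_{w_*;s}$ vanishes to order $m_s\le c\bigl(W(\mathcal F_{j_1},\dots,\mathcal F_{j_s});\mu\bigr)$. All counting then happens at the single point $0\in\Di_\rho$ for a single function, where ``differentiation drops the order by one'' is pure algebra and no boundary terms arise.

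Second, you never address degeneracy: if some $\mathcal F_j$ or some Wronskian family $W(\mathcal F_{i_1},\dots,\mathcal F_{i_l})$ vanishes identically (the $f_{\lambda;j}$ may be everywhere linearly dependent), the corresponding cyclicity on the right-hand side is undefined and your division steps break down. This is the reason the bound is a maximum over \emph{all} subsets $I$: the paper selects indices $j_1<\cdots<j_{d_\varphi}$ realizing the maximal dimension $d_\varphi$ of the span of the $f_{\varphi(w);j}$ along the curve, so that $W_{w;d_\varphi}$ is not identically zero in $w$ and the Frobenius operator built from that subset alone annihilates the full sum $g_w$. Your subset bookkeeping describes which terms would appear in a nondegenerate situation but does not produce the correct subset in general. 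To repair the proposal you would need to (i) replace disk counting by the $b_\varphi$ characterization, (ii) add the rank-selection and normalization steps, and (iii) verify $m_s\le c(\mathcal F_{\{j_1,\dots,j_s\}};\mu)$---at which point you would have reconstructed the paper's proof.
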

For $K=\{\mu\}\subset V$ with $\mu\not\in C(\mathcal F;V)$ (i.e., in the case of the multiplicity of zero at $0\in\Di$ of $f_\mu\not\equiv 0$) this result was proved in \cite[Th. 1]{VP}.\medskip

The proof of Theorem \ref{main} is presented in Section 3.
\section{Cyclicity of the Family of Generalized Exponential Polynomials}
Consider the family $\mathcal F=\{f_\lambda\}$ of entire functions of the form 
\[
\sum_{k=1}^m P_k(z)e^{Q_k(z)},\quad z\in\Co,
\]
where $P_k(z)=\sum_{i=0}^p c_{ki}z^i$ and $Q_k(z)=\sum_{j=1}^q d_{kj}z^j$ are holomorphic polynomials of degrees $p$ and $q$, respectively. We parameterize elements of this family by strings $\lambda=\bigl((c_{ki})_{0\le i\le p}, (d_{kj})_{1\le j\le q}\bigr)_{1\le k\le m}$ of the coefficients of polynomials $P_k$ and $Q_k$ involved in their definition, i.e., by points of $\Co^{N_{p,q,m}}$, where $N_{p,q,m}:=m (p+q+1)$.
Then
\begin{equation}\label{series}
f_\lambda(z)=\sum_{n=0}^\infty a_n(\lambda)z^n,\quad z\in\Co,
\end{equation}
where $a_n$ is a holomorphic polynomial in $\lambda$ of degree $n+1$ with nonnegative rational coefficients. Using Taylor series expansion for the exponent together with the multinomial theorem we obtain
\begin{equation}\label{series1}
a_n(\lambda)=\sum_{k=1}^m \left(\sum_{j=0}^{p} c_{kj}\left(\sum_{k_1+2k_2+\cdots +qk_q=n-j}\frac{d_{k k_1}^{k_1} d_{k k_2}^{k_2}\cdots d_{k k_q}^{k_q}}{k_1!\, k_2!\cdots k_q!}  \right)\right).
\end{equation}
(Here the inner summation is taken over all sequences $k_1,\dots, k_q\in\Z_+$ satisfying the required condition.)

\noindent For instance, 
\[
a_0(\lambda)=\sum_{k=1}^m c_{k0},\quad a_1(\lambda)=\sum_{k=1}^m c_{k1}+c_{k0}d_{k1},\quad a_2(\lambda)=\sum_{k=1}^m c_{k2}+c_{k1}d_{k1}+c_{k0}\left(d_{k2}+\frac{d_{k1}^2}{2}\right),
\]
etc.

The center set $\mathcal C(\mathcal F;\Co^{N_{p,q,m}})$ consists of points $\lambda\in \Co^{N_{p,q,m}}$ for which the corresponding generalized exponential polynomials $f_\lambda$ are identically zero. Using a simple induction argument on $m$, based on the comparison of exponential and polynomial growth of entire functions, one obtains that $\sum_{k=1}^m P_ke^{Q_k}\equiv 0$ if and only if there exists a subset $I\subset\{1,\dots, m\}$ such that 
\begin{itemize}
\item[(a)]
$Q_{i}=Q_j$ for all $i,j\in I$, and $Q_i\ne Q_j$ for all distinct $i,j\not\in I$ or if $i\in I$ and $j\not\in I$;
\item[(b)]
$\sum_{k\in I} P_{k}=0$ and $P_k=0$ for all $k\not\in I$.
\end{itemize}

Thus,  $\mathcal C(\mathcal F;\Co^{N_{p,q,m}})=\cup_{I}V_I$, where $I$ runs over all distinct subsets of $\{1,\dots, m\}$ and $V_I\subset\Co^{N_{p,q,m}}$ is the subspace of complex dimension $(|I|-1)(p+1)+(m-|I|+1)q$ defined by conditions (a) and (b); here $|I|$ is the cardinality of $I$.
\begin{R}\label{rem3.1}
{\rm All subspaces $V_I$ with $|I|=1$ coincide, while all other $V_I$ are pairwise distinct; hence, $\mathcal C(\mathcal F;\Co^{N_{p,q,m}})$ is the union of $2^m-m$ pairwise distinct subspaces of $\Co^{N_{p,q,m}}$.}
\end{R}

Let us present other methods related to the subject of the paper describing the center set $\mathcal C(\mathcal F;\Co^{N_{p,q,m}})$.

If $\mathcal F_k=\{f_{\lambda;k}\}$, where $f_{\lambda;k}:=P_ke^{Q_k}$, $1\le k\le m$, so that $\sum_{k=1}^m f_{\lambda;k}=f_\lambda$, then $\lambda\in \mathcal C(\mathcal F;\Co^{N_{p,q,m}})$ if and only if the functions $f_{\lambda;1},\dots ,f_{\lambda;k}$ are linearly dependent over $\Co$, that is, iff the Wronskian determinant $W(f_{\lambda;1},\dots ,f_{\lambda;k})$ is identically zero. According to \cite[Lm.~4, Ex.~1]{VP}, the family $W(\mathcal F_1,\dots,\mathcal F_m)$ has a form $\{S_\lambda e^{T_\lambda}\}_{\lambda\in\Co^{N_{p,q,m}}}$, where $S_\lambda$ and $T_\lambda$ are polynomials of degrees at most $mp+\frac 12 m(m-1)(q-1)$ and $q$, respectively. Thus, due to properties (d), (e) and (f) of cyclicity,
\begin{equation}\label{eq3.8}
c(W(\mathcal F_1,\dots,\mathcal F_m);\mu)\le mp+\frac 12 m(m-1)(q-1)\quad\text{for all}\quad \mu\in \Co^{N_{p,q,m}}.
\end{equation}
Moreover,
\[
\mathcal C(\mathcal F;\Co^{N_{p,q,m}})=\{\lambda\in \Co^{N_{p,q,m}}\, :\, R_\lambda\equiv 0\},
\]
that is, the center set is defined as the zero set of at most $mp+\frac 12 m(m-1)(q-1)$ polynomials in $\lambda$.
 
Next, \eqref{eq3.8} and Theorem \ref{main} imply that {\em the cyclicity of the family $\mathcal F$ of generalized exponential polynomials satisfies the inequality}
\begin{equation}\label{eq3.9}
c(\mathcal F;\mu)\le  m-1+mp+\frac 12 m(m-1)(q-1)\, (=:c_{p,q,m})
\end{equation}
For $\mu\not\in C(\mathcal F;\Co^{N_{p,q,m}})$ (i.e., in the case of the multiplicity of zero at $0\in\Di$ of $f_\mu\not\equiv 0$) this result was proved in \cite{VP}.

Inequality \eqref{eq3.9} and property (c) of cyclicity give an alternative description of the center set of $\mathcal F$ (cf. \eqref{series}, \eqref{series1}):
\[
\mathcal C(\mathcal F;\Co^{N_{p,q,m}})=\{\lambda\in \Co^{N_{p,q,m}}\, :\, a_0(\lambda)=\cdots= a_{c_{p,q,m}}(\lambda)=0\}.
\]

Also, for any relatively compact subset $K\Subset \Co^{N_{p,q,m}}$, there exists a positive number $r_K$ such that each function $f_\lambda$, $\lambda\in K$, has at most $c_{p,q,m}$ zeros in $\Di_{r_K}$ (counting multiplicities).

Let us show that property (b$'$) of cyclicity can be strengthen in our case.
\begin{Proposition}\label{pr3.1}
Integral closures in the ring of holomorphic polynomials on $\Co^{N_{p,q,m}}$
of polynomial ideals $\mathcal I_{c_{p,q,m}}(\mathcal F)$ generated by $a_0,\dots ,a_{c_{p,q,m}}$ and $\mathcal I(\mathcal F)$ generated by all the $a_k$ coincide.
\end{Proposition}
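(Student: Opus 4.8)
Throughout write $R$ for the ring of holomorphic polynomials on $\Co^{N_{p,q,m}}$ and put $J:=\mathcal{I}_{c_{p,q,m}}(\mathcal{F})$, $I:=\mathcal{I}(\mathcal{F})$, with bars denoting integral closure in $R$. Since $J\subseteq I$ we have $\overline{J}\subseteq\overline{I}$ for free, so only $\overline{I}\subseteq\overline{J}$ requires proof. The plan is to deduce this from the pointwise analytic statement (b$'$) by exploiting the local nature of integral closure. In the Noetherian ring $R$ an element lies in $\overline{J}$ if and only if its germ lies in $\overline{J R_{\mathfrak{m}_\mu}}$ for every maximal ideal $\mathfrak{m}_\mu$, i.e.\ (by the Nullstellensatz) for every point $\mu\in\Co^{N_{p,q,m}}$; as integral closure also commutes with localization, it therefore suffices to prove the local equalities $\overline{J R_{\mathfrak{m}_\mu}}=\overline{I R_{\mathfrak{m}_\mu}}$ in each algebraic local ring $R_{\mathfrak{m}_\mu}$.

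Next I would pass from $R_{\mathfrak{m}_\mu}$ to the analytic local ring $\mathcal{O}_\mu$ of germs of holomorphic functions at $\mu$. The canonical homomorphism $R_{\mathfrak{m}_\mu}\to\mathcal{O}_\mu$ is faithfully flat with geometrically regular fibres, hence a regular homomorphism, and under such maps integral closure commutes with base change: for every ideal $\mathfrak{a}\subseteq R_{\mathfrak{m}_\mu}$ one has $\overline{\mathfrak{a}}\,\mathcal{O}_\mu=\overline{\mathfrak{a}\,\mathcal{O}_\mu}$, whence $\overline{\mathfrak{a}}=\overline{\mathfrak{a}\,\mathcal{O}_\mu}\cap R_{\mathfrak{m}_\mu}$ by faithful flatness. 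Since $J\mathcal{O}_\mu=\mathcal{I}_{c_{p,q,m}}(\mathcal{F};\mu)$ and $I\mathcal{O}_\mu=\mathcal{I}(\mathcal{F};\mu)$ are precisely the analytic ideals appearing in (b$'$), the desired local equality reduces to the analytic identity $\overline{\mathcal{I}_{c_{p,q,m}}(\mathcal{F};\mu)}=\overline{\mathcal{I}(\mathcal{F};\mu)}$ in $\mathcal{O}_\mu$, for every $\mu$. I expect this base-change step to be the main obstacle, as it relies on the regularity of the analytification morphism together with the corresponding stability theorem for integral closures under regular homomorphisms, both of which must be invoked carefully.

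Finally, the analytic identity follows from (b$'$) together with the uniform cyclicity bound \eqref{eq3.9}. By (b$'$) the integral closures of $\mathcal{I}_{c(\mathcal{F};\mu)}(\mathcal{F};\mu)$ and $\mathcal{I}(\mathcal{F};\mu)$ coincide in $\mathcal{O}_\mu$; and since $c(\mathcal{F};\mu)\le c_{p,q,m}$ by \eqref{eq3.9}, the inclusions
\[
\mathcal{I}_{c(\mathcal{F};\mu)}(\mathcal{F};\mu)\subseteq\mathcal{I}_{c_{p,q,m}}(\mathcal{F};\mu)\subseteq\mathcal{I}(\mathcal{F};\mu)
\]
force all three integral closures to coincide. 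This yields the analytic identity at every $\mu$, and tracing back through the two preceding reductions gives $\overline{J}=\overline{I}$, as claimed.
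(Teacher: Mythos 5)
Your argument is correct in outline but follows a genuinely different route from the paper's. You localize at every point, pass to the analytic local rings $\mathcal O_\mu$, and combine property (b$'$) with the uniform bound \eqref{eq3.9} --- that last step (all three integral closures in $\mathcal O_\mu$ coinciding because $c(\mathcal F;\mu)\le c_{p,q,m}$) is clean and correct. The paper, by contrast, never localizes: it starts from the Bautin-type inequality of property (b) on the unit polydisk and upgrades it to a single global growth estimate
$|a_k(\lambda)|\le A\,\bigl(\max\{1,\|\lambda\|_\infty\}\bigr)^k\cdot\sup_{\Di^{N_{p,q,m}}}|a_k|\cdot\max_{0\le i\le c_{p,q,m}}|a_i(\lambda)|$
valid on all of $\Co^{N_{p,q,m}}$, by exploiting the quasi-homogeneity of the coefficients visible in \eqref{series1}: under the substitution $\psi$ sending $c_{ki}\mapsto c_{ki}^{i+1}$ and $d_{kj}\mapsto d_{kj}^{j}$ one has $a_k\bigl(\psi(z\cdot\lambda)\bigr)=z^{k+1}a_k(\psi(\lambda))$. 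It then invokes the metric criterion for integral dependence of \cite[Prop.~1.1~(c)]{HRT} once, globally. What your route buys is generality: it would apply to any family for which (b$'$) and a uniform cyclicity bound are available, with no special structure of the $a_n$. What it costs is that all the real difficulty is concentrated in the step you yourself flag, namely $\overline{\mathfrak a R_{\mathfrak m_\mu}}=\overline{\mathfrak a\,\mathcal O_\mu}\cap R_{\mathfrak m_\mu}$; this is true (faithful flatness and regularity of the analytification homomorphism, plus stability of integral closure under normal base change, cf.\ \cite[Ch.~19]{HS}), but it is a substantial package of theorems that you only name rather than verify, and a complete write-up would have to assemble those citations with care. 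The paper's $\psi$-trick sidesteps every local-to-global descent issue and keeps the proof elementary modulo property (b) and the HRT criterion.
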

\begin{proof}
Let $\Di^{N_{p,q,m}}$ be the open unit polydisk in $\Co^{N_{p,q,m}}$. According to property (b) of cyclicity, there exists a constant $A>0$ such that for all $k> c_{p,q,m}$ and $\lambda\in \Di^{N_{p,q,m}}$,
\begin{equation}\label{eq3.12}
|a_k(\lambda)|\le A\cdot\sup_{\Di^{N_{p,q,m}}}|a_k|\cdot\max_{0\le i\le c_{p,q,m}}|a_i(\lambda)|.
\end{equation}
Consider a polynomial map $\psi: \Co^{N_{p,q,m}}\rightarrow \Co^{N_{p,q,m}}$ which sends coordinates $c_{ki}$ to $c_{ki}^{i+1}$, $0\le i\le p$,  and coordinates $d_{kj}$ to $d_{kj}^j$, $1\le j\le q$, $1\le k\le m$.  Note that $\psi$ maps $\Di^{N_{p,q,m}}$ onto
$\Di^{N_{p,q,m}}$ and, according to \eqref{series1},
\[
a_k\bigl(\psi(z\cdot\lambda)\bigr)=z^{k+1}\cdot a_k(\psi(\lambda))\quad\text{for all}\quad k\in\Z_+,\ z\in\Co.
\]
These and \eqref{eq3.12} imply, for all $\lambda\in \Co^{N_{p,q,m}}$,
\[
\begin{array}{l}
\displaystyle
|a_k\bigl(\psi(\lambda)\bigr)|\le A\cdot \|\lambda\|_\infty^{k+1}\cdot \sup_{\Di^{N_{p,q,m}}}|a_k|\cdot \max_{0\le i\le c_{p,q,m}}\left\{\frac{|a_i\bigl(\psi(\lambda)\bigr)|}{ \|\lambda\|_\infty^{i+1}}\right\}\medskip \\
\displaystyle
\le A \cdot \bigl(\max\{1,\|\lambda\|_\infty\}\bigr)^k \cdot\sup_{\Di^{N_{p,q,m}}}|a_k|\cdot \max_{0\le i\le c_{p,q,m}}|a_i\bigl(\psi(\lambda)\bigr)|;
\end{array}
\]
here $\|\cdot\|_\infty$ is $\ell_\infty$ norm on $\Co^{N_{p,q,m}}$.

Finally, by the definition of the map $\psi$ we have for all $\lambda\in  \Co^{N_{p,q,m}}$ with $\|\lambda\|_\infty\ge 1$,
\[
\|\lambda\|_\infty=\|\psi^{-1}\bigl(\psi(\lambda)\bigr)\|_\infty\le \|\psi(\lambda)\|_\infty .
\]
Since $\psi$ is surjective, from the previous two formulas we get, for all $\lambda\in  \Co^{N_{p,q,m}}$,
\begin{equation}
|a_k(\lambda)|\le A\cdot\bigl(\max\{1,\|\lambda\|_\infty\}\bigr)^k \cdot\sup_{\Di^{N_{p,q,m}}}|a_k|\cdot\max_{0\le i\le c_{p,q,m}}|a_i(\lambda)|.
\end{equation}\label{eq3.13}
These inequalities together with \cite[Prop.~1.1 (c)]{HRT} imply that each polynomial $a_k$ with $k> c_{p,q,m}$ is integral over the ideal $\mathcal I_{c_{p,q,m}}(\mathcal F)$. This gives the required statement.
\end{proof}
\begin{R}\label{rem3.2}
{\rm 
Proposition \ref{pr3.1} and the  Brian\c{c}on--Skoda-type theorem, see, e.g., \cite[Cor.~13.3.4]{HS}, imply that all coefficients of the Maclaurin series expansion of the family
$\mathcal F^{N_{p,q,m}}=\{f_\lambda^{N_{p,q,m}}\}$ belong to $\mathcal I_{c_{p,q,m}}(\mathcal F)$.
An interesting question is {\em whether the ideal $\mathcal I_{c_{p,q,m}}(\mathcal F)$ is integrally closed?}
}
\end{R}

Next, we present the Cartan-type estimates for the family $\mathcal F$.

For a nonpolynomial $f_\lambda=\sum_{k=1}^m P_k e^{Q_k}$ and $w\in\Co$ by $R_{\lambda;w}$ we denote a (unique) positive number such that
\[
\max_{1\le k\le m}\sup_{\Di_{R_\lambda;w}(w)}|Q_k|=1;
\]
here $\Di_{R}(w):=\{z\in\Co\, :\, |z-w|<R\}$.
\begin{Proposition}\label{prop3.3}
There is a number $R_{\mathcal F}>0$ such that for each nonpolynomial $f_\lambda\in\mathcal F$ and $R\in (0,R_{\lambda;w} R_{\mathcal F})$, and fixed $H\in (0,1]$ and $d > 0$ there exists a family of open disks $\{D_{j;\lambda}\}_{1\le j\le k}$, $k\le c_{p,q,m}$, with
$\sum r_{j;\lambda}^d\le \frac{(2H R)^d}{d}$, where $r_{j;\lambda}$ is the radius of $D_{j;\lambda}$, such that
\begin{equation}\label{cartan1}
|f_\lambda(z)|\ge \sup_{\Di_R(w)}|f_\lambda|\cdot \left(\frac{H}{A}\right)^{c_{p,q,m}+1}\quad\text{for all}\quad z\in \Di_{R/e}(w)\setminus \cup_j D_{j;\lambda}.
\end{equation}
(Here $A=e^{\left(\frac{e+1}{e-1}\right)^2}$.)
\end{Proposition}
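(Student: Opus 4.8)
The plan is to reduce inequality~\eqref{cartan1}, by an affine change of the variable $z$ followed by two normalizations of the parameter, to a single application of the Cartan-type estimate~(h$'$) on one fixed compact subset $K_0$ of $\Co^{N_{p,q,m}}$. The universal radius $R_{\mathcal F}$ will be the number $R_1$ that~(h$'$) produces for $K_0$ when $\varepsilon=1$; the cyclicity input will be supplied, uniformly in the parameter, by~\eqref{eq3.9}.

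First I would pass to the natural scale attached to $R_{\lambda;w}$. Fixing a nonpolynomial $f_\lambda$ and a centre $w$ for which $R_{\lambda;w}$ is defined, set $\zeta:=(z-w)/R_{\lambda;w}$ and $\tilde f(\zeta):=f_\lambda(w+R_{\lambda;w}\zeta)$. An affine substitution preserves polynomial degrees, so $\tilde f=\sum_{k=1}^m\tilde P_k e^{\tilde Q_k}$ with $\deg\tilde P_k\le p$ and $\deg\tilde Q_k\le q$, and the defining property of $R_{\lambda;w}$ becomes $\max_k\sup_{\Di}|\tilde Q_k|=1$. The substitution carries $\Di_R(w)$ and $\Di_{R/e}(w)$ to $\Di_{\tilde R}$ and $\Di_{\tilde R/e}$ with $\tilde R:=R/R_{\lambda;w}<R_{\mathcal F}$, it preserves the suprema of $|\,\cdot\,|$ over these disks, and it turns exceptional disks of radii $\tilde r_j$ into disks of radii $r_{j;\lambda}=R_{\lambda;w}\tilde r_j$; hence $\sum_j\tilde r_j^{\,d}\le (2H\tilde R)^d/d$ transforms into precisely $\sum_j r_{j;\lambda}^{\,d}\le (2HR)^d/d$, and the desired bound for $f_\lambda$ on $\Di_{R/e}(w)$ is equivalent to the same bound for $\tilde f$ on $\Di_{\tilde R/e}$. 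Thus it suffices to establish~\eqref{cartan1} for the normalized $\tilde f$.

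Next I would compactify the parameter. Writing $e^{\tilde Q_k}=e^{\tilde Q_k(0)}e^{\tilde Q_k-\tilde Q_k(0)}$ and absorbing $e^{\tilde Q_k(0)}$ into $\tilde P_k$, I may assume each $\tilde Q_k$ has zero constant term, so that $\tilde f$ again belongs to the family $\mathcal F$ (its parameter lies in $\Co^{N_{p,q,m}}$), while $\sup_{\Di}|\tilde Q_k|\le 2$; by the Cauchy estimates the coefficients of the $\tilde Q_k$ then lie in a fixed bounded set. Since $|\tilde Q_k(0)|\le 1$, the absorption changes the $\tilde P_k$ only by factors of modulus in $[e^{-1},e]$. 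Finally, both sides of~\eqref{cartan1} scale by the same factor under $f_\lambda\mapsto c f_\lambda$ (and the exceptional disks are unchanged), so I may divide $\tilde f$ by $\max_{k,i}|\mathrm{coeff}_i(\tilde P_k)|$ and thereby put the coefficients of the $\tilde P_k$ on the unit sphere. After these reductions the parameter of $\tilde f$ ranges over a fixed compact set $K_0\subset\Co^{N_{p,q,m}}$; and since $f_\lambda$ is nonpolynomial, $\tilde f\not\equiv 0$, so this parameter lies in $K_0\setminus\mathcal C(\mathcal F;\Co^{N_{p,q,m}})$.

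Finally I would invoke~(h$'$) for $K_0$ with $\varepsilon=1$, defining $R_{\mathcal F}:=R_1$. By property~(a) and~\eqref{eq3.9}, $c(\mathcal F;K_0)=\max_{\mu\in K_0}c(\mathcal F;\mu)\le c_{p,q,m}$. Applying~(h$'$) to $\tilde f$ on $\Di_{\tilde R}$ with $\tilde R<R_{\mathcal F}$ gives exceptional disks obeying the radius budget together with
\[
|\tilde f(\zeta)|\ge\sup_{\Di_{\tilde R}}|\tilde f|\cdot A^{-1}\left(\frac{H}{A}\right)^{c(\mathcal F;K_0)}\quad\text{for}\quad \zeta\in\Di_{\tilde R/e}\setminus\textstyle\bigcup_j D_j.
\]
Because $0<H/A\le 1/A<1$ and $c(\mathcal F;K_0)\le c_{p,q,m}$ one has $(H/A)^{c(\mathcal F;K_0)}\ge (H/A)^{c_{p,q,m}}$, and because $H\le 1$ one has $A^{-1}\ge H/A$; multiplying these two inequalities yields $A^{-1}(H/A)^{c(\mathcal F;K_0)}\ge (H/A)^{c_{p,q,m}+1}$, which is exactly the constant appearing in~\eqref{cartan1}. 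Undoing the rescaling of the second paragraph transfers the estimate back to $f_\lambda$ on $\Di_{R/e}(w)$ and completes the argument. The main obstacle is the compactification: the $Q$-directions are confined by the normalization $\max_k\sup_{\Di}|\tilde Q_k|=1$ hard-wired into $R_{\lambda;w}$ together with the Cauchy estimates, whereas the a priori unbounded $P$-directions must be tamed through the degree-one homogeneity of~\eqref{cartan1} in $f_\lambda$; the care lies in making these two normalizations coexist---in particular in keeping $\tilde f$ inside $\mathcal F$ after absorbing the constant terms of the $Q_k$---and in checking that the disk budget $\sum_j r^d$ rescales exactly, the cyclicity bound itself being free from~\eqref{eq3.9}.
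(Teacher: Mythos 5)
Your proof is correct and follows essentially the same route as the paper: normalize the parameter onto a fixed compact subset of $\Co^{N_{p,q,m}}$ (the paper uses $K_1=\{\sum_{k,i}|c_{ki}|=1,\ \max_k\sup_{\Di}|Q_k|=1\}$, you use an $\ell_\infty$-normalization of the $P$-coefficients and a Cauchy-estimate box for the $Q$-coefficients), apply property (h$'$) with $\varepsilon=1$ and $R_{\mathcal F}:=R_1$, bound $c(\mathcal F;K_0)$ by $c_{p,q,m}$ via \eqref{eq3.9}, and use $A^{-1}\left(\frac{H}{A}\right)^{c(\mathcal F;K_0)}\ge\left(\frac{H}{A}\right)^{c_{p,q,m}+1}$ before undoing the affine substitution $z\mapsto w+R_{\lambda;w}\zeta$. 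The only difference is that you carry out explicitly the reduction the paper leaves to the reader, in particular the absorption of the constant terms $e^{\tilde Q_k(0)}$ into the $\tilde P_k$ so that the rescaled function stays inside the family $\mathcal F$.
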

\begin{proof}
Recall that
$\lambda=\bigl((c_{ki})_{0\le i\le p}, (d_{kj})_{1\le j\le q}\bigr)_{1\le k\le m}$, where coordinates $c_{ki}$ and $d_{kj}$ are coefficients of polynomials $P_k$ and $Q_k$, respectively. Consider the compact set
\[
K_1:=\{\lambda\in\Co^{N_{p,q,m}}\, :\, \sum_{k,i}|c_{ki}|=1,\ \max_{1\le k\le m}\sup_{\Di}|Q_k|=1\}\Subset\Co^{N_{p,q,m}}.
\]
We apply property (h$'$) of cyclicity (see \eqref{cartan}) to $K_1$ and $\varepsilon=1$, and set $R_{\mathcal F}:=R_1$ (here $R_\varepsilon$ is as in property (h)). Then from the corresponding inequality \eqref{cartan} using that $A^{-1}\left(\frac{H}{A}\right)^{c(\mathcal F;K)}\ge \left(\frac{H}{A}\right)^{c_{p,q,m}+1}$ we get inequality \eqref{cartan1} for $w=0$, $R_{\lambda;w}=1$.
The general case is easily reduced to the previous one by the substitution $z\mapsto R_{\lambda;w}(z+w)$, $z\in\Co$. We leave the details to the readers.
\end{proof}
If $f_\lambda$ is a polynomial, then its degree is at most $p$ and it satisfies the analog of inequality \eqref{cartan1} with $c_{p,q,m}+1$  replaced by $p$ for all $R>0$.

Similarly, property (h$''$) of cyclicity (see \eqref{remez}) and the arguments of the proof of the previous proposition yield Remez-type inequalities for the family of generalized exponential polynomials.
\begin{Proposition}\label{prop3.5}
There exists an absolute constant $\hat{c}\ge 1$ such that for each nonpolynomial $f_\lambda\in\mathcal F$, an interval $I\subset\Di_{(R_{\lambda;w}R_{\mathcal F})/e}(w)$ and a measurable subset $\omega\subset I$ the following inequality holds:
\begin{equation}\label{remez1}
\sup_I |f_\lambda|\le \Phi\left(\frac{2m_1(I)}{m_1(\omega)}-1\right)^{\hat{c}\cdot\max\{1,c_{p,q,m}\}}\cdot\sup_\omega |f_\lambda|.
\end{equation}
\end{Proposition}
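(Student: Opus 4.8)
The plan is to run the argument of Proposition~\ref{prop3.3} essentially verbatim, with the Cartan estimate (h$'$) replaced by the Remez inequality (h$''$). First I would reuse the same normalizing compact set
\[
K_1=\{\lambda\in\Co^{N_{p,q,m}}\, :\, \sum_{k,i}|c_{ki}|=1,\ \max_{1\le k\le m}\sup_{\Di}|Q_k|=1\}\Subset\Co^{N_{p,q,m}},
\]
and apply property (h$''$) to $K_1$ with $\varepsilon=1$, so that the relevant neighbourhood is $O_1$ and the relevant radius is $R_1=R_{\mathcal F}$. For a parameter $\lambda\in K_1$ (equivalently, in the normalized situation $w=0$, $R_{\lambda;w}=1$), inequality \eqref{remez} reads
\[
\sup_I|f_\lambda|\le\Phi\!\left(\tfrac{2m_1(I)}{m_1(\omega)}-1\right)^{\hat{c}\cdot\max\{c(\mathcal F;K_1),1\}}\cdot\sup_\omega|f_\lambda|,\qquad I\subset\Di_{R_{\mathcal F}/e},\ \omega\subset I.
\]

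Next I would pass from the exponent $\hat{c}\cdot\max\{c(\mathcal F;K_1),1\}$ to the claimed $\hat{c}\cdot\max\{1,c_{p,q,m}\}$. Since $\omega\subset I$ forces $\tfrac{2m_1(I)}{m_1(\omega)}-1\ge 1$, and $\Phi(t)=t+\sqrt{t^2-1}\ge 1$ for $t\ge 1$, the right-hand side only increases when the exponent is enlarged; by \eqref{eq3.9} we have $c(\mathcal F;K_1)\le c_{p,q,m}$, so the bound with exponent $\hat{c}\cdot\max\{1,c_{p,q,m}\}$ holds a fortiori. This establishes \eqref{remez1} in the normalized case.

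Finally I would reduce a general nonpolynomial $f_\lambda$ to this case by the affine substitution $z\mapsto R_{\lambda;w}z+w$ used in Proposition~\ref{prop3.3}, which maps $\Di$ onto $\Di_{R_{\lambda;w}}(w)$ and hence $\Di_{R_{\mathcal F}/e}$ onto $\Di_{(R_{\lambda;w}R_{\mathcal F})/e}(w)$. Under this map each $Q_k$ is replaced by $Q_k(R_{\lambda;w}z+w)$, whose supremum over $\Di$ equals $\sup_{\Di_{R_{\lambda;w}}(w)}|Q_k|$; by the definition of $R_{\lambda;w}$ the $Q$-normalization $\max_k\sup_\Di|Q_k|=1$ is thereby achieved. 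Rescaling the $P_k$ by a common constant (which does not affect the $Q$'s) normalizes $\sum_{k,i}|c_{ki}|$ to $1$, placing the transformed parameter in $K_1$. The interval $I$ and the set $\omega$ pull back to $\tilde I\subset\Di_{R_{\mathcal F}/e}$ and $\tilde\omega\subset\tilde I$ with $m_1(I)=R_{\lambda;w}m_1(\tilde I)$ and $m_1(\omega)=R_{\lambda;w}m_1(\tilde\omega)$, so the ratio $\tfrac{2m_1(I)}{m_1(\omega)}$—and therefore $\Phi$ of it—is invariant, while the common constant from the $P$-rescaling cancels in the quotient $\sup_I|f_\lambda|/\sup_\omega|f_\lambda|$. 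Applying the normalized inequality to the transformed parameter then yields \eqref{remez1} in general.

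The computation is routine once Proposition~\ref{prop3.3} is in hand, so I do not expect a genuine obstacle; the only points requiring care are the exact scale-invariance of the Remez ratio under the affine change of variable (both measures carry the same factor $R_{\lambda;w}$, so their quotient is unchanged) and the legitimacy of enlarging the exponent, which rests precisely on $\Phi\ge 1$ together with $\tfrac{2m_1(I)}{m_1(\omega)}-1\ge 1$.
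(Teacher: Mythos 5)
Your proposal is correct and matches the paper's own (very brief) argument, which simply invokes property (h$''$) applied to the same normalizing compact set $K_1$ with $\varepsilon=1$ together with the affine rescaling from the proof of Proposition \ref{prop3.3}. Your added details --- the scale-invariance of the Remez ratio and the legitimacy of enlarging the exponent via $\Phi\ge 1$ and $c(\mathcal F;K_1)\le c_{p,q,m}$ --- are exactly the points the paper leaves to the reader.
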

If $f_\lambda$ is a polynomial, then instead of \eqref{remez1} we have the classical Remez inequality with the factor on the right-hand side replaced by $T_p\left(\frac{2m_1(I)}{m_1(\omega)}-1\right)$, where $T_p$ is the Chebyshev polynomial of degree $p$, valid for all measurable $\omega\subset I\Subset\Co$.
\begin{R}
{\rm
Using inequality \eqref{remez1} one obtains various local distributional inequalities for the family of multidimensional generalized exponential polynomials of 
the form $\sum_{k=1}^m P_k e^{Q_k}$, where $P_k$ and $Q_k$ are holomorphic polynomials on $\Co^N$ of degrees $p$ and $q$, respectively (for the corresponding references and results see the Introduction and Section 2 in \cite{Br2}).}
\end{R}

Theorem \ref{main} allows to obtain effective estimates for the cyclicity of more complicated families of functions (for instance, the family $\sum_{k=1}^m\mathcal F_{k1}e^{\mathcal F_{k2}}$, where all $\mathcal F_{ki}$ are families of generalized exponential polynomials). Using then the properties of cyclicity one obtains for such families results similar to those described in the present section.

\section{Proof of Theorem \ref{main}}
\subsection{Equivalent Definition of Cyclicity}
For a domain $O\Subset V\subset\Co^n$ by $\mathcal O_c(\Di,O)$ we denote the set of holomorphic maps $\varphi:\bar{\Di}\rightarrow O$ (i.e., each $\varphi$ is holomorphic in a suitable open neighbourhood of the closure $\bar{\Di}$ of $\Di$.) Let $\mathcal F=\{f_\lambda\}_{\lambda\in V}$ be the family of holomorphic functions in the disk $\Di_\rho$ depending holomorphically on $\lambda\in V$ and $\varphi\in \mathcal O_c(\Di,O)$ be such that $\varphi(\Di)\not\subset\mathcal C(\mathcal F;O)$. We will assume that $\mathcal F\ne 0$, i.e., it contains nonidentically zero functions. Then the family $\mathcal F_\varphi=\{f_{\varphi(w)}\}_{w\in\Di\setminus\varphi^{-1}(\mathcal C(\mathcal F;O))}$ consists of nonidentically zero holomorphic functions on $\Di_\rho$ and its center set $\mathcal C(\mathcal F_\varphi;\Di)=\varphi^{-1}(\mathcal C(\mathcal F;O))$ consists of finitely many points.
Let us consider the Maclaurin series expansion of $\mathcal F_{\varphi}$,
\[
f_{\varphi(w)}(z)=\sum_{k=0}^\infty c_{k}(w)z^k,\quad z\in\Di_\rho,\quad c_{k}\in\mathcal O(\Di).
\]
Let $b_{\varphi}(\mathcal F)\in\Z_+$ be the minimal number such that the ideal $\mathcal I(\mathcal F_\varphi;\Di)\subset\mathcal O(\Di)$ generated by all the $c_{k}$ coincides with the ideal $\mathcal I_{b_{\varphi}(\mathcal F)}(\mathcal F_\varphi;\Di)\subset\mathcal O(\Di)$ generated by $c_{0},\dots, c_{b_{\varphi}(\mathcal F)}$. Then Theorem 1.3 of \cite{Br1} states that for a compact subset $K\subset V$
\begin{equation}\label{eq2.3}
c(\mathcal F;K)=\lim_{O\rightarrow K}\sup_{\varphi\in \mathcal O_c(\Di,O)}b_{\varphi}(\mathcal F),
\end{equation}
where the limit is taken over the filter of open neighbourhoods of $K$.
\subsection{Proof of Theorem \ref{main}}
According to property (a) of cyclicity, it suffices to prove the result for $K=\{\mu\}$, a point in $V$. Without loss of generality we assume that $\mathcal F\ne 0$. Thus, due to \eqref{eq2.3},
there exists $\varphi\in \mathcal O_c(\Di,O)$, $\varphi(\Di)\not\subset\mathcal C(\mathcal F;O)$, where $O$ is an open neighbourhood of $\mu$ such that
\[
c(\mathcal F;\mu)=b_{\varphi}(\mathcal F).
\]
Recall that $\mathcal F:=\sum_{j=1}^k \mathcal F_j$, where $\mathcal F_j:=\{f_{\lambda;j}\}_{\lambda\in V}$, $1\le j\le k$.
By $S_\lambda\subset\mathcal O(\Di_\rho)$ we denote the vector space generated by $f_{\lambda;1},\dots, f_{\lambda;k}$, $\lambda\in V$, and by
$d_\lambda$ its complex dimension, so that $0\le d_\lambda\le k$. 
We set
\[
d_\varphi:=\sup_{w\in\Di}d_{\varphi(w)}.
\]
By the definition, $d_\varphi\ge 1$ and there exist
$w_0\in\Di$ and $1\le j_1<\cdots< j_{d_\varphi}\le k$ such that $S_{\varphi(w_0)}\subset\mathcal O(\Di_\rho)$ coincides with the space generated by $f_{\varphi(w_0);j_s}$, $1\le s\le d_{\varphi}$, and $d_{\varphi(w_0)}=d_\varphi$.

We set 
\[
g_{w;s}:=f_{\varphi(w);j_s},\qquad  W_{w;p}:=W(g_{w;1},\dots, g_{w;s}),\qquad 1\le s\le d_\varphi,\quad\text{and}\quad W_{w;0}:=1.
\]
Then the center set of the family $\{W_{w;d_\varphi}\}_{w\in\Di}$ consists of finitely many points and outside these points each $g_w:=\sum_{j=1}^k f_{\varphi(w);j}$ satisfies the ordinary differential equation with meromorphic coefficients (the Frobenius formula, see \cite{F}):
\begin{equation}\label{frob}
\frac{W_{w;d_\varphi}}{W_{w;d_\varphi-1}}\cdot\frac{d}{dz}\cdot \frac{W_{w;d_\varphi-1}^2}{W_{w;d_\varphi}\cdot W_{w;d_\varphi-2}}\cdot\frac{d}{dz}\cdots\frac{d}{dz}\cdot
\frac{W_{w;1}^2}{W_{w;2}\cdot W_{w;0}}\cdot\frac{d}{dz}\cdot \frac{W_{w;0}}{W_{w;1}}\cdot g_w=0.
\end{equation}

Next, consider the Taylor series expansions of families $\{g_w\}$ and $\{W_{w;s}\}$, $1\le s\le d_\varphi$,
\[
\begin{array}{l}
\displaystyle
g_w(z)=\sum_{k=0}^\infty c_{k}(w)z^k,\quad z\in\Di_\rho,\quad c_{k}\in\mathcal O(\Di);\medskip\\
\displaystyle W_{w;s}(z)=\sum_{k=0}^\infty b_{k;s}(w)z^k,\quad z\in\Di_\rho,\quad b_{k;s}\in\mathcal O(\Di),\quad 1\le s\le d_\varphi.
\end{array}
\]
By definition, the center sets of these families are subsets of the center set, say $C$, of the family $\{W_{w;d_\varphi}\}_{w\in\Di}$. In particular, they are finite subsets of $\Di$. Thus, there exist univariate holomorphic polynomials $P$ and $P_s$ with zeros (if any) in $C$, functions $\tilde c_k$ and $\tilde b_{k;s}\in \mathcal O(\Di)$ and numbers $l_s\in\Z_+$, $1\le s\le d_\varphi$, such that 
$c_k=P\cdot\tilde c_k$ and $b_{k;s}=P_s\cdot\tilde b_{k;s}$ for all $k$ and $s$, and each of 
the families $\{\tilde c_0,\dots, \tilde c_{b_\varphi(\mathcal F)}\}$ and
$\{\tilde b_{0;s},\dots, \tilde b_{l_s;s}\}$, $1\le s\le d_\varphi$, has no common zeros on $\Di$. 
(Therefore by the corona theorem, see, e.g., \cite{GR}, the ideals generated by these families in $\mathcal O(\Di)$ coincide with $\mathcal O(\Di)$.) 

We set
\[
\begin{array}{l}
\displaystyle
\tilde g_w(z)=\sum_{k=0}^\infty \tilde c_{k}(w)z^k,\quad z\in\Di_\rho;\medskip\\
\displaystyle \widetilde W_{w;s}(z)=\sum_{k=0}^\infty \tilde b_{k;s}(w)z^k,\quad z\in\Di_\rho,\quad 1\le s\le d_\varphi.
\end{array}
\]
Then \eqref{frob} implies
\begin{equation}\label{frob1}
\frac{\widetilde W_{w;d_\varphi}}{\widetilde W_{w;d_\varphi-1}}\cdot\frac{d}{dz}\cdot \frac{\widetilde W_{w;d_\varphi-1}^2}{\widetilde W_{w;d_\varphi}\cdot \widetilde W_{w;d_\varphi-2}}\cdot\frac{d}{dz}\cdots\frac{d}{dz}\cdot
\frac{\widetilde W_{w;1}^2}{\widetilde W_{w;2}\cdot W_{w;0}}\cdot\frac{d}{dz}\cdot \frac{W_{w;0}}{\widetilde W_{w;1}}\cdot \tilde g_w=0.
\end{equation}

Further, by the definition of $b_\varphi(\mathcal F)$, there exists $w_*\in\Di$ such that $\tilde c_{b_\varphi(\mathcal F)}(w_*)\ne 0$ but
$\tilde c_k(w^*)=0$ for all $k<b_\varphi(\mathcal F)\, (=c(\mathcal F;\mu))$. In what follows, by $m_s$ we denote the multiplicity of zero of $\widetilde W_{w_*;s}$ at $0\in\Di$, $1\le s\le d_\varphi$. By the definition of cyclicity (cf. \cite[Th.~1.3]{Br1}) $m_s\le c\bigl(W(\mathcal F_{j_1},\dots,\mathcal F_{j_s});\mu\bigr)$,  $1\le s\le d_\varphi$.

We are ready to prove the theorem. Assume, on the contrary, that (cf. the formulation of the theorem)
\begin{equation}\label{eq2.6}
c(\mathcal F;\mu)>\max_{I\in\mathcal K}\{c(\mathcal F_I;\mu)+|I|-1\}.
\end{equation}
We apply the operator on the left-hand side \eqref{frob1} with index $w_*$ to $\tilde g_{w_*}$ and compute the resulting multiplicity of zero at $0\in\Di$. By our hypothesis \eqref{eq2.6}, the multiplicity of zero at $0\in\Di$ of 
\[
\frac{W_{w_*;0}}{\widetilde W_{w_*;1}}\cdot \tilde g_{w_*}
\]
is at least $c(\mathcal F;\mu)-m_1\ge c(\mathcal F;\mu)-c(\mathcal F_{j_1};\mu)\ge 1$. Similarly,
the multiplicity of zero at $0\in\Di$ of 
\[
\frac{\widetilde W_{w_*;1}^2}{\widetilde W_{w_*;2}\cdot W_{w_*;0}}\cdot\frac{d}{dz}\cdot \frac{W_{w_*;0}}{\widetilde W_{w_*;1}}\cdot \tilde g_{w_*}
\]
is at least $(c(\mathcal F;\mu)-m_1-1)+(2m_1-m_2)=c(\mathcal F;\mu)+m_1-m_2-1\ge c(\mathcal F;\mu)-c\bigl(W(\mathcal F_{j_1},\mathcal F_{j_2});\mu\bigr)-1\ge 1$, etc. After $d_\varphi$ steps we obtain that the multiplicity of zero at $0\in\Di$ of
\begin{equation}\label{eq2.7}
\frac{\widetilde W_{w_*;d_\varphi-1}^2}{\widetilde W_{w_*;d_\varphi}\cdot \widetilde W_{w_*;d_\varphi-2}}\cdot\frac{d}{dz}\cdots\frac{d}{dz}\cdot
\frac{\widetilde W_{w_*;1}^2}{\widetilde W_{w_*;2}\cdot W_{w_*;0}}\cdot\frac{d}{dz}\cdot \frac{W_{w_*;0}}{\widetilde W_{w_*;1}}\cdot \tilde g_{w_*}
\end{equation}
is at least $c(\mathcal F;\mu)+m_{d_\varphi -1}-m_{d_\varphi}-d_{\varphi}+1\ge c(\mathcal F;\mu)-c\bigl(W(\mathcal F_{j_1},\dots,\mathcal F_{j_{d_\varphi}});\mu\bigr)-d_{\varphi}+1\ge 1$. On the other hand, due to \eqref{frob1} the function in \eqref{eq2.7} is constant,
a contradiction with the previous conclusion.

The proof of the theorem is complete.

\end{document}